\documentclass{amsart}
\usepackage{a4wide}
\usepackage{amsmath,amsfonts,amssymb}
\usepackage{amsthm}
\usepackage{ifthen}
\usepackage{longtable}
\usepackage{array}
\usepackage{url}
\usepackage{enumerate}
\usepackage{enumitem}
\usepackage{hyperref}

\textwidth=400pt

\newcommand{\nl}{\vspace{\baselineskip}}

\newcommand{\Z}{\mathbb{Z}}
\newcommand{\Q}{\mathbb{Q}}

\newcommand{\C}{\mathbb{C}}
\newcommand{\K}{\mathbb{K}}
\newcommand{\N}{\mathbb{N}}

\newcommand{\fdg}{\colon}

\newcommand{\bb}{,\ldots ,}

\newtheorem*{theorem*}{Theorem}
\newtheorem{thm}{Theorem}
\newtheorem{lem}{Lemma}
\newtheorem{cor}{Corollary}

\newtheorem{example}{Example}

\theoremstyle{definition}
	
	\newtheorem{problem}{Problem}

\theoremstyle{remark}

\begin{document}
\title[Integers representable as differences of linear recurrence sequences]{Integers representable as differences of linear recurrence sequences}
\subjclass[2010]{11D61, 11B39, 11D45} 
\keywords{Diophantine equations, Pillai’s problem, Recurrence sequence}

\author[R. Tichy]{Robert Tichy}
\address{R. Tichy,
Institute of Analysis and Number Theory, Graz University of Technology,
Kopernikusgasse 24/II, 
A-8010 Graz, Austria
}
\email{tichy\char'100tugraz.at}

\author[I. Vukusic]{Ingrid Vukusic}
\address{I. Vukusic,
University of Salzburg,
Hellbrunnerstrasse 34/I,
A-5020 Salzburg, Austria}
\email{ingrid.vukusic\char'100stud.sbg.ac.at}

\author[D. Yang]{Daodao Yang}
\address{D. Yang,
Institute of Analysis and Number Theory, Graz University of Technology,
Kopernikusgasse 24/II, 
A-8010 Graz, Austria
}
\email{yang\char'100tugraz.at}

\author[V. Ziegler]{Volker Ziegler}
\address{V. Ziegler,
University of Salzburg,
Hellbrunnerstrasse 34/I,
A-5020 Salzburg, Austria}
\email{volker.ziegler\char'100sbg.ac.at}

\begin{abstract}
Let $\{U_n\}_{n \geq 0}$ and $\{V_m\}_{m \geq 0}$ be two linear recurrence sequences.  We establish  an asymptotic formula  for the number of integers $c$ in the range $[-x, x]$ which can be represented as differences $ U_n - V_m$. In particular, the density of such integers is $0$.
\end{abstract}

\maketitle

\section{Introduction}
Pillai's Conjecture \cite{Pillai1936} 
states that for any given positive integer $c$ the Diophantine equation
\begin{equation}\label{eq:Pillai}
	a^n - b^m =c
\end{equation}
 has only finitely many positive integer solutions $(a,b,n,m)$ with $n,m\geq 2$. 
Pillai's conjecture is a corollary of the $abc$ conjecture. For $c = 1$, it coincides with  Catalan's conjecture, which has been proved by Mih\u{a}ilescu \cite{Mihailescu2004}. For all $c > 1$, Pillai's conjecture is still open. 

For fixed integers $a, b$ Pillai \cite{Pillai1936, Pillai1937} proved that for sufficiently large $c$ there is at most one solution $(n, m)$ with $n, m \geq 2$ to equation \eqref{eq:Pillai}.  Pillai \cite{Pillai1931}  also proved the following asymptotic result on the number of integers $c$ in the range $[1, x]$ which can be expressed in the form $c = a^n - b^m$:
\begin{equation}\label{eq:Pillai_asymp}
	\# \{ c\in [1,x] \fdg c=a^n - b^m \text{ for some } (n,m)\in \N^2\}    
    \sim 
    \frac{(\log x)^2}{2 (\log a) (\log b)},
    \quad \text{as } x\to \infty.
\end{equation}
We denote by $\N$ the set of all non-negative integers.

In recent years, there have been several papers studying a generalised version of equation~\eqref{eq:Pillai}, that is
\begin{equation}\label{eq:main_intro}
	U_n - V_m = c,
\end{equation}
where $\{U_n\}_{n \geq 0}$ and $\{V_m\}_{m \geq 0}$ are  linear recurrence sequences of integers.

For instance, in \cite{DdamuliraLucaRakotomalala2017} the authors considered the case where $\{U_n\}_{n \geq 0}$ are the Fibonacci numbers and $\{V_m\}_{m \geq 0}$ are the powers of two. In \cite{BravoLucaYazan2017} the authors considered the Tribonacci numbers and powers of two and in \cite{ChimPinkZiegler2017} the authors considered the Fibonacci numbers and the Tribonacci numbers. In each paper, the authors found all integers $c$ 
having at least two different representations of the form $c = U_n - V_m$ for the respective sequences.

Chim, Pink and Ziegler \cite{Chim_Pink_Ziegler2018} proved, that this is possible for general linear recurrence sequences (with a few subtle restrictions), i.e. 
there exists an effectively computable finite set $\mathcal{C}$ such that
equation \eqref{eq:main_intro} has at least two distinct solutions $(n, m)$ if and only if $c \in \mathcal{C}$.
This can be seen as the generalisation of Pillai's result in \cite{Pillai1936, Pillai1937}.

What has not been established properly yet, is for how many integers $c$ there exists a solution to \eqref{eq:main_intro} at all. 
In other words, Pillai's result \eqref{eq:Pillai_asymp} has not been extended yet. This is what we aim to do in this paper. In fact, we will find an asymptotic formula analogous to \eqref{eq:Pillai_asymp} for the number of integers $c\in [-x,x]$ 
which can be represented as $c=U_n-V_m$ for given linear recurrence sequences $\{U_n\}_{n \geq 0}$ and $\{V_m\}_{m \geq 0}$.
Our proof is based on ideas from \cite{Chim_Pink_Ziegler2018} and \cite{Yang2020} and in particular on lower bounds for linear forms in logarithms. A weaker version of this result has been proved by the third author in \cite{Yang2020}.

In order to state our result, we recall some definitions.

\nl

Let $\{U_n\}_{n \geq 0}$ be a linear recurrence sequence of integers given by
\[
	U_{n+k}=c_1 U_{n+k-1} + \dots + c_k U_n,
\]
for all $n\geq 0$, for some given $k\geq 1$, some given integers $c_1\bb c_k$ with $c_k\neq 0$ and some given integers $U_0\bb U_{k-1}$.
Then the characteristic polynomial of $\{U_n\}_{n \geq 0}$ is defined by
\[
	f(X)
	= X^k - c_1 X^{k-1} - \dots - c_k
	=\prod_{i=1}^t (X-\alpha_i)^{\sigma_i},
\]
where $\alpha_1 \bb \alpha_t$ are distinct complex numbers and $\sigma_1\bb \sigma_t$ are positive integers whose sum is $k$.
It is known that for any such sequence $\{U_n\}_{n \geq 0}$ there exist poynomials $a_1(X)\bb a_t(X)$ with coefficients in $\Q(\alpha_1\bb \alpha_t)$ and degrees $\deg a_i(X)\leq \sigma_i -1$ for $i=1\bb t$, such that the formula 
\[
	U_n=\sum_{i=1}^t a_i(n)\alpha_i^n 
\]
holds for all $n\geq 0$. We call $\alpha=\alpha_1$ a dominant root, if $|\alpha_1|>|\alpha_2|\geq \dots \geq |\alpha_t|$ and $a_1(X)$ is not the zero polynomial. In this case the sequence $\{U_n\}_{n \geq 0}$ is said to satisfy the dominant root condition.

Now we state our main result.

\begin{thm}\label{thm:main}
Let $\{U_n\}_{n\geq 0}$ and $\{V_m\}_{m\geq 0}$ be two linear recurrence sequences of integers satisfying the dominant root condition with dominant roots $\alpha$ and $\beta$ respectively. Suppose that $|\alpha|>1$ and $|\beta| >1$ and that $\alpha$ and $\beta$ are multiplicatively independent. Let
\[
	S(x)=\# \{c\in [-x,x]\fdg c=U_n-V_m \text{ for some } (n,m)\in \N^2\}.
\]
Then we have the asymptotic behaviour
\[
	S(x)
	\sim \frac{(\log x)^2}{\log |\alpha| \log |\beta|},
	\quad \text{as } x \to \infty.
\] 
More precisely, we have
\[
	\frac{(\log x)^2}{\log |\alpha| \log |\beta|}  + O(\log x \cdot \log\log x)
	\leq S(x)
	\leq \frac{(\log x)^2}{\log |\alpha| \log |\beta|} + O(\log x \cdot (\log \log x)^2),
\]
for $x$ large enough. The implied constants are effective.
\end{thm}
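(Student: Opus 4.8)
The plan is to separate the count of \emph{values} $c$ from the count of \emph{pairs} $(n,m)$, and to reduce the theorem to a lattice-point estimate in the $(n,m)$-plane. Set
\[
	P(x)=\#\{(n,m)\in\N^2 \fdg |U_n-V_m|\le x\},
	\qquad
	r(c)=\#\{(n,m)\in\N^2 \fdg U_n-V_m=c\}.
\]
Then $S(x)=\#\{c\fdg r(c)\ge 1,\ |c|\le x\}$ while $P(x)=\sum_{|c|\le x}r(c)$, so that $P(x)-S(x)=\sum_{|c|\le x}(r(c)-1)$. By the theorem of Chim, Pink and Ziegler \cite{Chim_Pink_Ziegler2018} there is a finite set $\mathcal C$ with $r(c)\ge 2$ exactly when $c\in\mathcal C$, and for each fixed $c$ the equation $U_n-V_m=c$ reduces (via multiplicative independence) to an $S$-unit equation with finitely many solutions; hence $\sum_{c\in\mathcal C}(r(c)-1)=O(1)$ and it suffices to prove the two-sided estimate for $P(x)$. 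This reduction is exactly where \cite{Chim_Pink_Ziegler2018} enters, and without it the lower bound could fail.

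To estimate $P(x)$ I would first note that the dominant root condition forces $\alpha$ and $\beta$ to be real: a non-real dominant root would share its modulus with its complex conjugate, contradicting $|\alpha_1|>|\alpha_2|$. Consequently
\[
	\log|U_n|=n\log|\alpha|+O(\log n),
	\qquad
	\log|V_m|=m\log|\beta|+O(\log m),
\]
and writing $N_\alpha(y):=\#\{n\fdg |U_n|\le y\}$ one gets $N_\alpha(y)=\tfrac{\log y}{\log|\alpha|}+O(\log\log y)$, similarly for $N_\beta$. The lower bound for $P(x)$ is then elementary: every pair with $|U_n|\le x/2$ and $|V_m|\le x/2$ satisfies $|U_n-V_m|\le x$, and these number $N_\alpha(x/2)\,N_\beta(x/2)=\tfrac{(\log x)^2}{\log|\alpha|\log|\beta|}+O(\log x\cdot\log\log x)$.

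For the upper bound I would split the pairs with $|U_n-V_m|\le x$ into a ``rectangle'' part, where $|U_n|\le 2x$ and $|V_m|\le 2x$, and a ``diagonal'' part, where (say) $|U_n|>2x$, which forces $|V_m|$ to be large and comparable to $|U_n|$. The rectangle part is bounded by $N_\alpha(2x)\,N_\beta(2x)$, again giving the main term. The diagonal part is the heart of the matter: there $U_n$ and $V_m$ must nearly cancel, so writing $U_n=a\alpha^n(1+o(1))$ and $V_m=b\beta^m(1+o(1))$ with leading coefficients $a,b$ (the polynomial factors $a_1(n),b_1(m)$ being absorbed as a slowly varying perturbation), the linear form
\[
	\Lambda=n\log|\alpha|-m\log|\beta|+\log|a/b|
\]
is forced to be small, of size $O(x/|U_n|)$. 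Since $\alpha$ and $\beta$ are multiplicatively independent, $\Lambda$ is a nonzero linear form in logarithms of algebraic numbers, and the Baker--W\"ustholz/Matveev lower bound $|\Lambda|\gg \max(n,m)^{-C}$, compared with the upper bound $|\Lambda|\ll x/|\alpha|^n$, yields $n\log|\alpha|\le \log x+O(\log\log x)$. Together with $|U_n|>2x$ this confines $n$ (and by symmetry $m$) to an interval of length $O(\log\log x)$, while a geometric-spacing argument shows each admissible $n$ pairs with only $O(1)$ values of $m$; the diagonal is therefore negligible and the stated error term is accommodated.

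The main obstacle is precisely this diagonal estimate. Making the linear-forms-in-logarithms argument uniform across the near-cancellation regime, and keeping the resulting interval for $n$ short enough, is where all the analytic difficulty lies, and it is the only place the hypotheses $|\alpha|,|\beta|>1$ and multiplicative independence are genuinely used. The surrounding bookkeeping — the polynomial coefficients $a_i(n)$, the possible negativity of $\alpha$ or $\beta$ (which only permutes signs and parities), and the subdominant-root error terms — is routine, though it must be tracked carefully to pin down the exact powers of $\log\log x$ in the final error estimate.
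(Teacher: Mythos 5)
Your overall architecture --- reduce $S(x)$ to a pair count, elementary counting for the lower bound, linear forms in logarithms for the upper bound --- is the same as the paper's, but your organization of the upper bound is genuinely different: you extract the main term from a trivial ``rectangle'' count ($|U_n|\le 2x$, $|V_m|\le 2x$) and invoke linear forms only on the near-cancellation ``diagonal'', whereas the paper bounds $n$ and $m$ for \emph{every} solution of $|U_n-V_m|\le x$ via a case distinction on $|c|$ versus $\alpha'^n,\beta'^m$. Your organization is viable and, if carried out carefully, even sharpens the upper error term: by \eqref{eq:Un} alone the rectangle contains at most $\left(\frac{\log x}{\log|\alpha|}+O(1)\right)\left(\frac{\log x}{\log|\beta|}+O(1)\right)$ pairs, and the diagonal turns out to contribute only $O((\log\log x)^3)$, giving $T(x)\le \frac{(\log x)^2}{\log|\alpha|\log|\beta|}+O(\log x)$. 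However, two of your steps are wrong as stated and must be repaired.

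First, the reduction. You claim $P(x)-S(x)=\sum_{|c|\le x}\left(r(c)-1\right)=O(1)$ by citing the finite exceptional set $\mathcal C$ of \cite{Chim_Pink_Ziegler2018}. That theorem carries additional hypotheses (the ``subtle restrictions'' alluded to in the introduction) which are \emph{not} assumed in the present theorem, and under the present hypotheses the set of multiply represented $c$ can be infinite. Concretely, take $U_n=F_n$ (Fibonacci, with $F_1=F_2=1$) and $V_m=2^m$ --- exactly the paper's Example: every $c=1-2^m$ has the two distinct representations $(1,m)$ and $(2,m)$, so $P(x)-S(x)\gg\log x$ and your $O(1)$ bound fails. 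What is true, and what the paper uses, is Lemma~\ref{lem:PCZ}, which is extracted from the \emph{proof} in \cite{Chim_Pink_Ziegler2018} rather than from its statement: any coincidence of values forces $n\le N$ or $m\le M$, and combining this with \eqref{eq:Un} shows the overcount is $O(\log x)$ (Lemma~\ref{lem:equiv}). This weaker bound is all one can get, and it is enough for the theorem.

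Second, the linear form. Your $\Lambda$ contains the term $\log|a(n)/b(m)|$, and the height of $a(n)/b(m)$ grows like $\log\max\{n,m\}$ (Lemma~\ref{lem:height_leq}). Feeding this into Matveev or Baker--W\"ustholz, the product of the heights with $\log B$ gives $\log|\Lambda|\ge -C\left(\log\max\{n,m\}\right)^2$, as in \eqref{eq:PhiLinForm}, and \emph{not} $|\Lambda|\gg\max(n,m)^{-C}$; the polynomial lower bound you assert would require $a(X)$ and $b(X)$ to be constants. Consequently your diagonal window for $n$ has length $O((\log\log x)^2)$ rather than $O(\log\log x)$ --- still negligible, so the strategy survives, but the intermediate estimate is false in general. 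Two smaller defects in the same step: (i) the non-vanishing of $\Lambda$ does not follow from the multiplicative independence of $\alpha$ and $\beta$ alone, because of the coefficient term; one must exclude $a(n)/b(m)=\beta^m/\alpha^n$, which the paper does by playing Lemma~\ref{lem:height_geq} against Lemma~\ref{lem:height_leq}; (ii) for a fixed diagonal $n$, the number of admissible $m$ is $O(\log\log x)$, not $O(1)$, again because of the slowly varying factor $b(m)$ --- harmless, but the ``geometric spacing'' claim as stated is only correct when $b$ is constant.
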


\begin{cor}
Assume the same conditions for $\{U_n\}_{n \geq 0}$ and $\{V_m\}_{m \geq 0}$ as in Theorem \ref{thm:main}. Then the density of integers of the form $U_n - V_m$ is $0$.
\end{cor}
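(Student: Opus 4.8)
The plan is to read off the Corollary directly from Theorem~\ref{thm:main}, since the density of a set of integers is controlled entirely by the growth rate of its counting function. Writing $\mathcal S=\{U_n-V_m\fdg (n,m)\in\N^2\}\subseteq\Z$ for the set under consideration, I would first recall that its natural density is
\[
	d(\mathcal S)=\lim_{x\to\infty}\frac{\#(\mathcal S\cap[-x,x])}{\#(\Z\cap[-x,x])}
	=\lim_{x\to\infty}\frac{\#(\mathcal S\cap[-x,x])}{2\lfloor x\rfloor+1},
\]
whenever this limit exists. The numerator is by definition exactly $S(x)$ from Theorem~\ref{thm:main}, so the whole computation reduces to knowing the order of magnitude of $S(x)$.

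It then remains only to substitute the asymptotic supplied by the theorem. Using $S(x)\sim\frac{(\log x)^2}{\log|\alpha|\log|\beta|}$ together with $2\lfloor x\rfloor+1\sim 2x$, I would obtain
\[
	d(\mathcal S)=\lim_{x\to\infty}\frac{S(x)}{2\lfloor x\rfloor+1}
	=\frac{1}{2\log|\alpha|\log|\beta|}\lim_{x\to\infty}\frac{(\log x)^2}{x}=0,
\]
where the last limit vanishes because $(\log x)^2=o(x)$ and, since $|\alpha|,|\beta|>1$, the constant $\frac{1}{\log|\alpha|\log|\beta|}$ is finite and positive. I do not expect any genuine obstacle: all of the difficulty is already absorbed into Theorem~\ref{thm:main}, and the Corollary is merely the remark that a counting function of order $(\log x)^2$ forces zero density. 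The only points meriting a line of justification are that the symmetric window $[-x,x]$ contains $2\lfloor x\rfloor+1\sim 2x$ integers (fixing the normalisation) and that the estimate $S(x)=o(x)$ makes the conclusion insensitive to the precise convention one adopts for density.
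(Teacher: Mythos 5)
Your proposal is correct and matches the paper's (implicit) reasoning: the paper states the Corollary without a separate proof, treating it as an immediate consequence of Theorem~\ref{thm:main}, exactly as you do by observing that $S(x)\sim\frac{(\log x)^2}{\log|\alpha|\log|\beta|}=o(x)$ forces the density to vanish. Your extra care about the normalisation $2\lfloor x\rfloor+1\sim 2x$ is fine but not needed beyond the remark that $S(x)=o(x)$.
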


\begin{example}
The Fibonacci Numbers $\{F_n\}_{n \geq 0}$, 
defined by $F_0 = 0$, $F_1 = 1$ and $F_{n+2} = F_n + F_{n+1}$ for $n\geq 0$, have the dominant root $\alpha = \frac{1+\sqrt{5}}{2}$. 
Therefore, by Theorem \ref{thm:main}, the  number of integers $c$ in the range $[-x, x]$ which can be written in the form $c = F_n - 2^m$ is asymptotically equal to
\[
	\frac{(\log x)^2}{\log (\frac{\sqrt{5}+1}{2}) \cdot \log  2}.
\]

\end{example}

%
%

Chim, Pink an Ziegler \cite{Chim_Pink_Ziegler2018} proved for the situation of Theorem~\ref{thm:main} that if an integer $c$ has a representation $c=U_n-V_m$, then in the ``generic'' case this representation is unique.
Therefore, instead of counting integers $c\in [-x,x]$ which are representable as $c=U_n-V_m$, one can count the solutions $(n,m)$ to the Diophantine inequality $|U_n-V_m|\leq x$.
We will see that both ways of counting yield the same result. In fact,
Theorem~\ref{thm:main} is equivalent to the following.

\begin{thm}\label{thm:mainII}
Let $\{U_n\}_{n\geq 0}$ and $\{V_m\}_{m\geq 0}$ be two linear recurrence sequences of integers satisfying the dominant root condition with dominant roots $\alpha$ and $\beta$ respectively. Suppose that $|\alpha|>1$ and $|\beta| >1$ and that $\alpha$ and $\beta$ are multiplicatively independent. Let
\[
	T(x)=\# \{ (n,m)\in \N^2 \fdg |U_n-V_m|\leq x \}.
\]
Then we have
\[
	\frac{(\log x)^2}{\log |\alpha| \log |\beta|}  + O(\log x \cdot \log\log x)
	\leq T(x)
	\leq \frac{(\log x)^2}{\log |\alpha| \log |\beta|} + O(\log x \cdot (\log \log x)^2),
\]
for $x$ large enough. The implied constants are effective.
\end{thm}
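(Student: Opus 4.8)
The plan is to pass to a logarithmic scale and to count the pairs $(n,m)$ directly. Because both sequences satisfy the dominant root condition, I would first note that $U_n=a_1(n)\alpha^n(1+O(\gamma^n))$ for some $\gamma<1$, the subdominant roots being exponentially negligible, so that $\log|U_n|=n\log|\alpha|+O(\log n)$ and likewise $\log|V_m|=m\log|\beta|+O(\log m)$, the error stemming from the polynomial $a_1$. Writing $N(y)=\#\{n\in\N\colon|U_n|\leq y\}$ and $M(y)=\#\{m\in\N\colon|V_m|\leq y\}$, this yields
\[
	N(y)=\frac{\log y}{\log|\alpha|}+O(\log\log y),\qquad M(y)=\frac{\log y}{\log|\beta|}+O(\log\log y),
\]
where the error is the contribution of $a_1$ evaluated at the relevant index $\approx\log y/\log|\alpha|$.

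For the lower bound I would use an inscribed rectangle: any pair with $|U_n|\leq x/2$ and $|V_m|\leq x/2$ satisfies $|U_n-V_m|\leq x$, whence
\[
	T(x)\geq N(x/2)\,M(x/2)=\frac{(\log x)^2}{\log|\alpha|\log|\beta|}+O(\log x\cdot\log\log x),
\]
the cross terms of the product yielding the error term. This step needs no Diophantine input and already gives the clean lower bound.

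For the upper bound I would count by the first index, $T(x)=\sum_{n}\#\{m\in\N\colon|U_n-V_m|\leq x\}$, splitting according to the size of $|U_n|$. If $|U_n|\leq x$ then every partner satisfies $|V_m|\leq 2x$, so the inner count is at most $M(2x)=\log x/\log|\beta|+O(\log\log x)$; since there are $N(x)=\log x/\log|\alpha|+O(\log\log x)$ such indices, these rows contribute $(\log x)^2/(\log|\alpha|\log|\beta|)+O(\log x\cdot\log\log x)$. The remaining rows have $|U_n|>x$; here $U_n$ is large, so a partner can exist only if some $V_m$ falls in the narrow window $[U_n-x,U_n+x]$, and the whole difficulty is to bound how many such indices $n$ occur. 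This is where lower bounds for linear forms in logarithms become decisive.

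The hard part will be bounding the number of large rows. If $|U_n|>x$ and $|U_n-V_m|\leq x$ for some $m$, then $|U_n/V_m-1|\leq x/|V_m|$ is small and
\[
	\bigl|\,n\log|\alpha|-m\log|\beta|+\log|a_1(n)/b_1(m)|\,\bigr|\leq \frac{Cx}{|V_m|}+O(\gamma^{\min(n,m)}).
\]
Multiplicative independence of $\alpha$ and $\beta$ ensures that the linear form on the left is nonzero, so Matveev's theorem bounds it from below by $\exp(-E(n,m))$; since the integer coefficients have size $O(\log x)$ and $\log|a_1(n)/b_1(m)|$ is the logarithm of an algebraic number of height $O(\log\log x)$, one obtains $E(n,m)=O((\log\log x)^2)$. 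Comparing the two bounds forces $|U_n|\leq x\exp(O((\log\log x)^2))$, that is $n\leq \log x/\log|\alpha|+O((\log\log x)^2)$, so at most $O((\log\log x)^2)$ large rows occur; bounding the partners in each such row crudely by $O(\log x)$ then gives a contribution of $O(\log x\cdot(\log\log x)^2)$, which is exactly the extra factor appearing in the upper estimate. The main obstacle is thus the correct application of Matveev's theorem with the polynomial coefficients $a_i,b_j$ absorbed into the heights; the finitely many exact coincidences $U_n=V_m$ and the sign and oscillatory behaviour of the two sequences have to be treated separately, but they affect only the $O(\log x)$ level and not the stated error terms.
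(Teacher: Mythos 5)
Your skeleton is essentially the paper's: the lower bound via the inscribed rectangle $|U_n|\leq x/2$, $|V_m|\leq x/2$ is exactly what the paper does, and your row-by-row bookkeeping for the upper bound is only a cosmetic variant of the paper's strategy of showing that every solution has $n\leq \log x/\log|\alpha|+O((\log\log x)^2)$ and $m\leq \log x/\log|\beta|+O((\log\log x)^2)$. However, the Diophantine core of your upper bound has a genuine circularity. For a ``large row'' ($|U_n|>x$) you assert that Matveev's bound is $\exp(-O((\log\log x)^2))$ ``since the integer coefficients have size $O(\log x)$ and $\log|a_1(n)/b_1(m)|$ is the logarithm of an algebraic number of height $O(\log\log x)$''. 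But for such a row you know no bound on $n$ and $m$ at all --- bounding them is precisely what you are trying to prove. Matveev honestly gives only $\exp(-O((\log\max\{n,m\})^2))$, with the height of $a(n)/b(m)$ bounded by $O(\log\max\{n,m\})$ (Lemma~\ref{lem:height_leq}). Comparing with $x/|V_m|$ then yields the self-referential inequality $n\log|\alpha|\leq \log x+O((\log n)^2)$, and extracting $n\leq \log x/\log|\alpha|+O((\log\log x)^2)$ from it requires an extra untangling step; this is the content of the paper's Lemma~\ref{lem:mlogm}, which your argument omits and cannot do without.

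Two further points in the same step are wrong or missing. First, multiplicative independence of $\alpha$ and $\beta$ does \emph{not} ensure that your linear form is nonzero: the coefficient term $\log|a_1(n)/b_1(m)|$ can conspire with $n\log|\alpha|-m\log|\beta|$. The degenerate case $a(n)\alpha^n=b(m)\beta^m$ must be excluded separately, by playing the height lower bound $C_0\max\{n,m\}\leq h(\alpha^n/\beta^m)$ (Lemma~\ref{lem:height_geq}) against the upper bound $C\log\max\{n,m\}$ (Lemma~\ref{lem:height_leq}); this is not the same issue as the ``finitely many exact coincidences $U_n=V_m$'' you mention. Second, your inequality carries the term $O(\gamma^{\min(n,m)})$ on the right, but your comparison with Matveev tacitly assumes the term $Cx/|V_m|$ dominates. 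In the regime where $\gamma^{\min(n,m)}$ dominates (the paper's Case 2), the comparison only yields $\min\{n,m\}=O((\log\max\{n,m\})^2)$, i.e.\ one index is tiny, and one then needs a separate elementary argument ($|V_m|\leq \exp(O((\log n)^2))$, hence $|U_n|\leq x+\exp(O((\log n)^2))$, then untangle again via Lemma~\ref{lem:mlogm}) to recover the bound on $n$. All three gaps are fixable --- they correspond exactly to the paper's $\Lambda=0$ case, its Case 1 versus Case 2 distinction, and Lemma~\ref{lem:mlogm} --- but they constitute the actual substance of the proof rather than routine details.
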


The paper is structured as follows. In Section \ref{sec:prelim} we state some preliminary results, in particular results on linear forms in logarithms, heights, the result from \cite{Chim_Pink_Ziegler2018} and some elementary inequalities. We also prove the equivalence of Theorem~\ref{thm:main} and Theorem~\ref{thm:mainII}.
In Section~\ref{sec:lower} we use elementary arguments to prove the lower bound for $T(x)$. In Section~\ref{sec:upper} we prove the upper bound for $T(x)$ using linear forms in logarithms. Finally, in Section~\ref{sec:further} we put some further problems.

\section{Preliminaries}\label{sec:prelim}

In this section we present the tools for our proof. The most powerful one is certainly lower bounds for linear forms in logarithms. Moreover, will need some estimates for heights. Next, we state some facts on linear recurrence sequences and the result from \cite{Chim_Pink_Ziegler2018}, which will show the equivalence of Theorem~\ref{thm:main} and Theorem~\ref{thm:mainII}. Finally, we check some simple relations between inequalities, which will be important for the proofs of the lower and the upper bound in Theorem~\ref{thm:main} and Theorem~\ref{thm:mainII}.

\subsection{Linear forms in logarithms and heights}

Let $\gamma$ be an algebraic number of degree $d\geq 1$ with the minimal polynomial
\[
	a_d X^d + \dots +a_1 X + a_0 
	= a_d \prod _{i=1}^{d} (X- \gamma_i),
\]
where $a_0, \dots, a_d$ are relatively prime integers and $\gamma_1, \dots, \gamma_d$ are the conjugates of $\gamma$. Then the \textit{logarithmic height} of $\gamma$ is given by
\[
	h(\gamma)
	= \frac{1}{d} \left(
		\log |a_d|
		+ \sum_{i=1}^d \log \left( \max \{ 1,|\gamma_i|\} \right)
		\right).
\]

Since the first results by Baker, there have been many powerful results on lower bounds for linear forms in logarithms. In particular, in 1993 Baker and W\"ustholz \cite{BakerWuestholz1993} obtained a very good explicit bound. In the following years, further improvements were made. At the present time, one of the most widely used results is due to Matveev \cite{Matveev}. 
The following theorem \cite[Thm. 9.4]{Matveev_Folgerung} is a consequence of Matveev's result.

\begin{thm}[Matveev's theorem]\label{thm:matveev}
 Let $\gamma_1,\dots , \gamma_t$ be non-zero algebraic numbers in a number field $\K$ of degree $D$, let $b_1,\dots,b_t$ be rational integers, and let
\[
	\Lambda= \gamma_1^{b_1}\cdots \gamma_t^{b_t}-1
\]
be non-zero. Then
\[
	\log |\Lambda|>
		-3 \cdot 30^{t+4} \cdot (t+1)^{5.5}\cdot D^{2}
		(1+\log D)(1+\log tB) A_1 \cdots A_t,
\]
where
\[
	B\geq \max \left\{|b_1|,\dots,|b_t|\right\}
\]
and
\[
	A_i\geq \max\left\{D h(\gamma_i),|\log \gamma_i|,0.16 \right\}
	\quad
	\mbox{for} \quad i=1,\dots,t.
\]
\end{thm}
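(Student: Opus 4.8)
Matveev's theorem is a deep result from the theory of transcendental numbers rather than something to be derived from the elementary tools assembled so far; indeed the paper quotes it, and in practice I would rely on it in exactly the same way. If pressed to prove it, I would follow the Gel'fond--Baker method and argue by contradiction. Assume the nonzero number $\Lambda=\gamma_1^{b_1}\cdots\gamma_t^{b_t}-1$ satisfies $\log|\Lambda|\le -C A_1\cdots A_t$ for a constant $C$ larger than the one claimed. Choosing suitable branches of the logarithm, smallness of $|\Lambda|$ transfers to smallness of the linear form $L=b_1\log\gamma_1+\cdots+b_t\log\gamma_t$, so the task reduces to bounding from below how close a nonzero $\Z$-linear combination of logarithms of algebraic numbers can be to $0$. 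The guiding principle throughout is the tension between two facts about a cleverly chosen quantity: analytically it can be forced to be extremely small, while arithmetically, being a nonzero algebraic number of controlled degree and height, it cannot be smaller than an explicit Liouville-type bound.

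Concretely, I would construct an auxiliary exponential polynomial $\Phi(z)=\sum_{\lambda} p(\lambda)\,\gamma_1^{\lambda_1 z}\cdots\gamma_t^{\lambda_t z}$ whose coefficients $p(\lambda)\in\ord_\K$ are produced by Siegel's lemma so that $\Phi$ vanishes together with many of its derivatives at the integer points $z=0,1,\dots,S$; this is possible precisely because the number of unknown coefficients exceeds the number of linear conditions imposed. Using the relation $\log\gamma_1\approx -(b_2\log\gamma_2+\cdots)/b_1$ coming from $L$ being tiny, the maximum-modulus principle (or a Schwarz-lemma/interpolation-determinant estimate) then shows that $\Phi$ and its low-order derivatives remain extremely small on a disc substantially larger than $\{0,\dots,S\}$.

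The decisive steps are the arithmetic estimate and the extrapolation. Each value $\Phi^{(j)}(s)$ is an algebraic number in $\K$ whose degree and height I would control explicitly in terms of $A_1,\dots,A_t$, $B$ and $D$; hence it is either $0$ or bounded below in absolute value by a Liouville inequality. Comparing this lower bound with the analytic upper bound forces many of these values to vanish, and a zero estimate (a multiplicity bound of Philippon type, where Matveev's treatment notably avoids the Kummer-theory descent used in the Baker--W\"ustholz approach) lets me propagate the vanishing to ever larger ranges by induction on $S$. Iterating, the accumulated vanishing conditions become incompatible with the nontriviality of the coefficients furnished by Siegel's lemma --- equivalently, a generalized Vandermonde determinant is forced to vanish while being provably nonzero --- which is the desired contradiction. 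The genuinely hard part is twofold: establishing the zero estimate in the required quantitative form, and, above all, arranging the bookkeeping so sharply that the final constant emerges in the explicit shape $3\cdot 30^{t+4}(t+1)^{5.5}D^2(1+\log D)(1+\log tB)$. This optimization is exactly Matveev's contribution, and reproducing it in full lies well beyond what the present paper needs, which is why invoking \cite{Matveev} is the appropriate course.
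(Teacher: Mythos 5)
Your approach matches the paper's exactly: Theorem~\ref{thm:matveev} is not proved in the paper at all, but quoted as a known tool (it is Thm.~9.4 of \cite{Matveev_Folgerung}, itself a consequence of Matveev's result \cite{Matveev}), and you correctly identify that invoking the citation is the appropriate course. Your sketch of the Gel'fond--Baker machinery (auxiliary function via Siegel's lemma, analytic extrapolation, Liouville-type arithmetic lower bound, zero estimate) is a reasonable outline of how the actual proof proceeds, but it is supplementary and rightly not relied upon.
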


In order to estimate the height of certain expressions, we will use the following two well known lemmas \cite[Lem. 1 and Lem. 2]{Chim_Pink_Ziegler2018}.

\begin{lem}\label{lem:height_geq}
Let $\mathbb{K}$ be a number field and $\alpha, \beta \in \mathbb{K}$ two multiplicatively independent algebraic numbers. Then there exists an effectively computable constant $C_0=C_0(\alpha,\beta) > 0$ such that
\[
	h\left(\frac{\alpha^n}{\beta^m}\right)
	\geq C_0\, \max\{|n|, |m|  \}, 
	\quad \text{for all } n,m \in \Z
\]
\end{lem}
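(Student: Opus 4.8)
The plan is to express the height as a sum over the places of $\K$ and to recognise it as the restriction to $\Z^2$ of a norm on $\R^2$. First I would recall the standard local decomposition of the Weil height: normalising the absolute values $|\cdot|_v$ of $\K$ so that the product formula $\sum_v d_v\log|\gamma|_v=0$ holds (where $d_v$ is the local degree and the sum runs over all places), one has for every $\gamma\in\K^\times$ the identity $h(\gamma)=\frac{1}{2[\K:\Q]}\sum_v d_v\bigl|\log|\gamma|_v\bigr|$, in which only finitely many terms are nonzero. Applying this to $\gamma=\alpha^n\beta^{-m}$ and abbreviating $u_v=\log|\alpha|_v$, $w_v=\log|\beta|_v$, I obtain $h(\alpha^n/\beta^m)=G(n,m)$, where $G(x,y)=\frac{1}{2[\K:\Q]}\sum_v d_v|x\,u_v-y\,w_v|$. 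The function $G\fdg\R^2\to\R$ is nonnegative, positively homogeneous of degree $1$, and subadditive, hence a seminorm; it is an explicit finite sum of absolute values of linear forms, since $u_v=w_v=0$ for all but finitely many $v$.

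The heart of the argument is to show that $G$ is in fact a norm, i.e. that $G(x,y)=0$ forces $(x,y)=(0,0)$; equivalently, that the vectors $U=(u_v)_v$ and $W=(w_v)_v$ are linearly independent over $\R$. I would prove the contrapositive. If $U=0$ then $|\alpha|_v=1$ at every place, so $\alpha$ is a root of unity by Kronecker's theorem, whence $\alpha,\beta$ are multiplicatively dependent; likewise if $W=0$. Otherwise a dependence gives $w_v=\rho\,u_v$ for all $v$ with some real $\rho\neq0$, and the crucial point is that $\rho$ must be rational. If $\alpha$ is not a unit, I choose a finite place $v_0$ with $u_{v_0}\neq0$; there both $u_{v_0}$ and $w_{v_0}$ are rational multiples of $\log p_{v_0}$, the rational prime below $v_0$, so $\rho\in\Q$. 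If $\alpha$ is a unit, then $w_v=\rho u_v$ forces $\beta$ to be a unit too, and the relation at the archimedean places says that the Dirichlet logarithmic embeddings of $\alpha$ and $\beta$ are parallel nonzero vectors in the unit lattice; since that lattice is discrete, parallel lattice vectors are rational multiples of each other, so again $\rho\in\Q$. Writing $\rho=P/Q$ with $Q\geq1$ yields $|\alpha^P\beta^{-Q}|_v=1$ at every place, so $\alpha^P\beta^{-Q}$ is a root of unity by Kronecker's theorem; raising it to a suitable power contradicts the multiplicative independence of $\alpha$ and $\beta$.

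Finally, since $G$ is a norm on $\R^2$, the continuous function $G$ attains a positive minimum $C_0$ on the compact set $\{(x,y)\fdg\max\{|x|,|y|\}=1\}$, and homogeneity gives $G(x,y)\geq C_0\max\{|x|,|y|\}$ for every $(x,y)\in\R^2$. Restricting to $(x,y)=(n,m)\in\Z^2$ then yields $h(\alpha^n/\beta^m)=G(n,m)\geq C_0\max\{|n|,|m|\}$, as claimed; the constant $C_0$ is effectively computable because $G$ is a finite, explicitly given sum of absolute values of linear forms with computable coefficients.

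I expect the step establishing the real-linear independence of $U$ and $W$ — specifically the rationality of $\rho$ — to be the main obstacle. Mere multiplicative, i.e.\ $\Z$-linear, independence only excludes a \emph{rational} kernel direction of $G$, whereas the desired uniform lower bound genuinely fails for a seminorm whose kernel is an irrational line through the origin; ruling this out is exactly where the arithmetic of $\alpha$ and $\beta$ (through Kronecker's theorem, the structure of non-archimedean valuations, and the discreteness of the unit lattice) has to be used.
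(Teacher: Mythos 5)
Your proof is correct, but note that there is no internal proof in the paper to compare it with: the authors state this lemma (together with Lemma~\ref{lem:height_leq}) as a known result quoted from \cite{Chim_Pink_Ziegler2018}, and give no argument. What you have written is therefore a self-contained reconstruction. Your route is the natural one: expand $h(\alpha^n/\beta^m)$ over the places of $\K$ via the product formula, recognise the result as the restriction to $\Z^2$ of the seminorm $G(x,y)=\frac{1}{2[\K:\Q]}\sum_v d_v\,\bigl|x\log|\alpha|_v-y\log|\beta|_v\bigr|$, and prove that $G$ is in fact a norm. You also correctly isolate the one genuinely delicate step, which a careless argument would miss: multiplicative independence only forbids \emph{rational} linear relations between the vectors $(\log|\alpha|_v)_v$ and $(\log|\beta|_v)_v$, whereas positivity of $G$ on the unit sphere needs independence over $\R$. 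Your three cases (Kronecker's theorem when one of the vectors vanishes; rationality of the proportionality factor $\rho$ at a finite place, where both entries are rational multiples of $\log p$; discreteness of the unit lattice when $\alpha$ and $\beta$ are both units) handle exactly this point, and each step is sound.

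The only place that needs one more sentence is effectivity. That $G$ is a sum of absolute values of linear forms with ``computable coefficients'' does not by itself yield an effective $C_0$: in general one cannot certify a positive lower bound for a computable real number merely from its computability. What makes your constant effective is your nonvanishing proof combined with piecewise linearity: the minimum of $G$ on $\{\max\{|x|,|y|\}=1\}$ is attained at one of finitely many explicit points (the corners and the breakpoints $x=\pm\log|\beta|_v/\log|\alpha|_v$ on the horizontal edges, and symmetrically on the vertical ones), each of these finitely many values can be approximated to arbitrary precision and has been \emph{proved} positive, so approximating until positivity is certified is a terminating procedure that outputs a valid $C_0$. Equivalently, one may choose two places $v,v'$ with $\log|\alpha|_v\log|\beta|_{v'}-\log|\alpha|_{v'}\log|\beta|_v\neq 0$ (they exist by your independence argument) and invert the corresponding $2\times 2$ system, obtaining an explicit $C_0$ in terms of this determinant. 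With a remark of this kind added, your proof is complete.
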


\begin{lem}\label{lem:height_leq}
Let $\mathbb{K}$ be a number field and $p, q \in \mathbb{K}[x]$ two arbitrary but fixed polynomials. Then there exists an effectively computable constant $C=C(p,q)>0$ such that
\[
	h\left(\frac{p(n)}{q(m)}\right)
	\leq C \log \left(\max \{n, m  \}\right), 
	\quad \text{for all } n,m \in \N_{\geq 2}.
\]
\end{lem}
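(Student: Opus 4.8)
The plan is to reduce the claim to the standard subadditivity and submultiplicativity properties of the logarithmic (Weil) height, so that no deep input is needed at all. First I would split the quotient: since $h(\gamma_1\gamma_2^{-1}) \leq h(\gamma_1) + h(\gamma_2^{-1}) = h(\gamma_1) + h(\gamma_2)$ for any algebraic numbers $\gamma_1, \gamma_2$, we immediately get
\[
	h\left(\frac{p(n)}{q(m)}\right) \leq h(p(n)) + h(q(m)).
\]
Thus it suffices to bound $h(p(n))$ by $O(\log n)$ and $h(q(m))$ by $O(\log m)$ separately.

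Next I would estimate the height of a fixed polynomial evaluated at a positive integer. Writing $p(x) = \sum_{i=0}^d p_i x^i$ with $p_i \in \K$ and $d = \deg p$, I combine the elementary inequalities $h(x_1 + \dots + x_k) \leq \log k + \sum_{j} h(x_j)$ and $h(x_1 \cdots x_k) \leq \sum_j h(x_j)$ with the fact that $h(n) = \log n$ for a positive integer $n$. Since $h(p_i n^i) \leq h(p_i) + i\log n$, this yields
\[
	h(p(n)) \leq \log(d+1) + \sum_{i=0}^d h(p_i) + \frac{d(d+1)}{2}\,\log n.
\]
Here $\log(d+1) + \sum_{i=0}^d h(p_i)$ is an additive constant depending only on $p$.

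Finally I would absorb this additive constant into the main term. Because we assume $n \geq 2$, we have $\log n \geq \log 2 > 0$, so any constant is at most a fixed multiple of $\log n$; hence $h(p(n)) \leq C_p \log n$ for an effectively computable $C_p = C_p(p) > 0$, and likewise $h(q(m)) \leq C_q \log m$. Combining this with the first step and using $\log n, \log m \leq \log\bigl(\max\{n,m\}\bigr)$ gives
\[
	h\left(\frac{p(n)}{q(m)}\right) \leq C_p \log n + C_q \log m \leq (C_p + C_q)\log\bigl(\max\{n,m\}\bigr),
\]
so the constant $C = C_p + C_q$ works, and it is effective since $d$, $\deg q$ and the finitely many coefficient heights are explicit.

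I do not expect a genuine obstacle here: every step is a standard effective property of the Weil height. The only point that needs care is the hypothesis $n, m \geq 2$, which is precisely what allows the additive constants to be absorbed into a multiple of $\log n$ and $\log m$; the pure-logarithmic bound would otherwise break down at $n = 1$, where $\log n = 0$.
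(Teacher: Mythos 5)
Your proof is correct. Note that the paper itself does not prove this lemma at all: it is quoted verbatim from Chim--Pink--Ziegler \cite{Chim_Pink_Ziegler2018} (their Lemma~2), so there is no in-paper argument to compare against; your derivation via the standard height inequalities $h(\gamma_1/\gamma_2)\leq h(\gamma_1)+h(\gamma_2)$, $h(x_1+\dots+x_k)\leq \log k+\sum_j h(x_j)$, $h(x_1\cdots x_k)\leq\sum_j h(x_j)$ and $h(n)=\log n$ is exactly the standard route, and your observation that the hypothesis $n,m\geq 2$ is what lets the additive constants be absorbed into $\log\max\{n,m\}$ is the right point of care. The only cosmetic remark is that the statement implicitly assumes $q(m)\neq 0$ (otherwise the quotient is undefined), which your argument, like the original, takes for granted.
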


\subsection{Linear recurrence sequences and solutions to $c=U_n-V_m$}
 
From now on, until the end of this paper, let $\{U_n\}_{n\geq 0}$ and $\{V_m\}_{m\geq 0}$ be two linear recurrence sequences of integers satisfying the dominant root condition with dominant roots $\alpha$ and $\beta$ respectively and $|\alpha|>1$ and $|\beta| >1$. Moreover, we assume that $\alpha$ and $\beta$ are multiplicatively independent.
Suppose that 
\begin{align*}
	U_n &=a(n)\alpha^n + a_2(n)\alpha_2^n + \dots + a_s(n)\alpha_s^n
	\quad \text{and} \\
	V_m &=b(m)\beta^m + b_2(m)\beta_2^m + \dots + b_t(m)\beta_t^m,
\end{align*}
for all $n,m \geq 0$.
As in \cite{Chim_Pink_Ziegler2018}, we use the $L$-notation: For functions $f(x),k(x)$ with $k(x)>0$ for $x>1$ we write
\[
	f(x)=L(k(x)) \quad \text{if} \quad
	|f(x)| \leq k(x).
\]
Then we have
\begin{align*}
	U_n&=a(n)\alpha^n + L(a' \alpha'^n)
	\quad \text{and} \\
	V_m&=b(m)\beta^m + L(b' \beta'^m),
\end{align*}
for some $1<\alpha'<|\alpha|$, $1<\beta' <|\beta|$ and $a',b'>0$. 
Suppose that $\deg a(X)=\sigma$ and $\deg b(X)=\tau$.
Then there exist positive constants $C_1,C_2,C_3,C_4$ such that
\begin{align}
C_1 |\alpha|^n & \leq |U_n| \leq C_2 n^\sigma |\alpha|^n \quad \text{and}  \label{eq:Un}\\
C_3 |\beta|^m  & \leq |V_m| \leq C_4 m^\tau   |\beta |^m, \label{eq:Vm}
\end{align}
for all $n,m$ large enough.

In order to prove Theorem~\ref{thm:main}, we will actually prove Theorem~\ref{thm:mainII}. The following lemma shows the equivalence of the two theorems. In other words, it allows us to switch between counting integers $c$ which have a representation $c=U_n-V_m$ and counting solutions $(n,m)$ of the Diophantine inequality $|U_n - V_m|\leq x$.

\begin{lem}\label{lem:equiv}
Under the same assumptions as in Theorem~\ref{thm:main} and Theorem~\ref{thm:mainII}, let 
\begin{align*}
	S(x)&=\# \{c\in [-x,x]\fdg c=U_n-V_m \text{ for some } (n,m)\in \N^2\} \quad \text{and}\\
	T(x) &= \# \{ (n,m)\in \N^2 \fdg |U_n-V_m|\leq x \}.
\end{align*}
Then
\[
	S(x)\leq T(x) \leq S(x) + O(\log x).
\]
\end{lem}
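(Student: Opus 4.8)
The plan is to prove the two inequalities separately. The left-hand inequality is elementary, whereas the right-hand one is where the finiteness result of Chim, Pink and Ziegler \cite{Chim_Pink_Ziegler2018} does the work.

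For the lower bound $S(x)\le T(x)$ I would exhibit an injection. By definition every integer $c$ counted by $S(x)$ admits at least one pair $(n,m)\in\N^2$ with $U_n-V_m=c$; choose one such pair $\psi(c)$ for each of these $c$. Since $|U_n-V_m|=|c|\le x$, the pair $\psi(c)$ is counted by $T(x)$, and since the value $U_n-V_m$ is determined by the pair, distinct $c$ yield distinct pairs. Hence $S(x)\le T(x)$.

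For the upper bound, the point is that $T(x)$ and $S(x)$ count the same values up to multiplicity. Writing $r(c)$ for the number of pairs $(n,m)\in\N^2$ with $U_n-V_m=c$, we have $T(x)=\sum_{|c|\le x}r(c)$ and $S(x)=\#\{c:|c|\le x,\ r(c)\ge 1\}$, so that
\[
	T(x)-S(x)=\sum_{\substack{|c|\le x\\ r(c)\ge 1}}\bigl(r(c)-1\bigr)=\sum_{\substack{|c|\le x\\ r(c)\ge 2}}\bigl(r(c)-1\bigr).
\]
A value $c$ with $r(c)\ge 2$ is exactly one admitting at least two distinct representations $c=U_n-V_m$; by \cite{Chim_Pink_Ziegler2018} the set $\mathcal C$ of all such $c$ is finite and effectively computable. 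Thus for $x$ large enough the remaining sum is taken over the fixed finite set $\mathcal C$, giving $T(x)-S(x)=\sum_{c\in\mathcal C}\bigl(r(c)-1\bigr)$.

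It therefore remains to show that each $r(c)$ with $c\in\mathcal C$ is finite and bounded independently of $x$, and this is the \emph{main obstacle}: the cited theorem guarantees only that $\mathcal C$ is finite as a set, not a priori that a single $c$ has finitely many representations. I would supply this via linear forms in logarithms. For a solution of $U_n-V_m=c$ with $n,m$ large, the dominant-root expansions together with \eqref{eq:Un} and \eqref{eq:Vm} show that
\[
	\Lambda=\frac{a(n)\,\alpha^n}{b(m)\,\beta^m}-1
\]
is exponentially small in $m$, since $U_n-V_m=c$ is bounded while $|V_m|\to\infty$; moreover $\Lambda\ne 0$ for large indices because $\alpha$ and $\beta$ are multiplicatively independent. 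Applying Theorem \ref{thm:matveev} with the three bases $a(n)/b(m),\alpha,\beta$, using $h\bigl(a(n)/b(m)\bigr)\ll\log\max\{n,m\}$ from Lemma \ref{lem:height_leq} and $B=\max\{n,m\}$, gives a lower bound $\log|\Lambda|>-\kappa\,(\log\max\{n,m\})^2$ for some constant $\kappa$. Comparing this with the exponential upper bound forces $m$, and hence $n$, to be bounded by an absolute constant, so $r(c)=O(1)$. Summing over the finitely many $c\in\mathcal C$ yields $T(x)-S(x)=O(1)$, which is in particular $O(\log x)$, as required; the crude bound $O(\log x)$ stated in the lemma is all that is needed in the sequel.
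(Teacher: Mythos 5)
Your lower bound $S(x)\le T(x)$ is fine, but the upper bound rests on a claim that is false under the hypotheses of Theorem~\ref{thm:main}: the finiteness of the set $\mathcal{C}$ of integers $c$ admitting at least two representations. The main theorem of \cite{Chim_Pink_Ziegler2018} requires additional hypotheses beyond ``dominant roots, $|\alpha|,|\beta|>1$, multiplicatively independent'' (these are the ``subtle restrictions'' alluded to in the introduction of the paper), and they are not assumed here. In fact $\mathcal{C}$ can be infinite: take $U_n=2^n$ and $V_m=F_{m+1}$, the Fibonacci sequence shifted so that $V_0=V_1=1$. Both sequences satisfy all hypotheses of Theorem~\ref{thm:main} (the dominant roots $2$ and $(1+\sqrt{5})/2$ are multiplicatively independent), yet every integer $c=2^n-1$ has the two distinct representations $(n,0)$ and $(n,1)$, so $\mathcal{C}$ contains the infinite set $\{2^n-1 \fdg n\in\N\}$. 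This example also shows that your final conclusion $T(x)-S(x)=O(1)$ is false in general: each $n$ with $2^n-1\le x$ contributes two pairs to $T(x)$ but only one value to $S(x)$, whence $T(x)-S(x)\ge \log x/\log 2+O(1)$. The $O(\log x)$ in the lemma is sharp, not a crude simplification.

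The paper's proof avoids exactly this trap: it uses only the weaker Lemma~\ref{lem:PCZ}, which (unlike the full finiteness theorem) does follow from the proof in \cite{Chim_Pink_Ziegler2018} under the present hypotheses. Namely, if $U_n-V_m=U_{n'}-V_{m'}$ with $(n,m)\neq(n',m')$, then $n\le N$ or $m\le M$; by symmetry the same holds for $(n',m')$, so \emph{every} pair counted by $T(x)$ whose value has a second representation satisfies $n\le N$ or $m\le M$. In the case $m\le M$ one gets $C_1|\alpha|^n\le |U_n|\le x+\max\{|V_0|\bb |V_M|\}$ from \eqref{eq:Un}, forcing $n=O(\log x)$, and symmetrically in the case $n\le N$; hence there are only $O(\log x)$ such ``ambiguous'' pairs altogether, which gives $T(x)\le S(x)+O(\log x)$ directly. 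If you wish to keep your decomposition $T(x)-S(x)=\sum_{r(c)\ge 2}(r(c)-1)$, you should bound this sum by the total number of pairs representing multiply-represented values and then invoke Lemma~\ref{lem:PCZ} as above; your linear-forms argument bounding $r(c)$ for each fixed $c$ is then unnecessary (and by itself cannot repair the proof, since the sum has unboundedly many terms).
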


The first inequality is clear because each solution $(n,m)$ of $|U_n-V_m|\leq x$ corresponds to an integer $c$ in $[-x,x]$ which has the representation $c=U_n-V_m$.

For the second inequality we need the fact that ``most'' representations are
unique. This was proved in \cite{Chim_Pink_Ziegler2018}. We state the result as a lemma. Note that this is not the main result in \cite{Chim_Pink_Ziegler2018}, but it follows immediately from the proof.

\begin{lem}\label{lem:PCZ}
Assume the same conditions for $\{U_n\}_{n \geq 0}$ and $\{V_m\}_{m \geq 0}$ as in Theorem~\ref{thm:main} and Theorem~\ref{thm:mainII}. Then there are effectively computable constants $N$ and $M$ such that if
\[
	U_n-V_m=U_{n'}-V_{m'}
	\quad \text{with } (n,m)\neq(n',m'),
\]
then either $n\leq N$ or $m \leq M$.
\end{lem}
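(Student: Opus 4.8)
The plan is to follow the method of Chim, Pink and Ziegler: assume a second representation exists, isolate a linear form in logarithms from the dominant terms, and use Matveev's theorem together with the two height lemmas to force one of the indices to be small. First I would rewrite the hypothesis $U_n-V_m=U_{n'}-V_{m'}$ as
\[
	U_n-U_{n'}=V_m-V_{m'}
\]
and dispose of the degenerate cases. If $n=n'$, then $V_m=V_{m'}$; since $|V_m|=|b(m)|\,|\beta|^m(1+o(1))$ grows strictly in magnitude for large $m$ by \eqref{eq:Vm}, this forces $m=m'$ unless one of $m,m'$ is bounded, and the case $m=m'$ is symmetric. Hence I may assume $n\neq n'$ and $m\neq m'$, and, after possibly interchanging the two representations, that $n>n'$. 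Substituting the expansions $U_n=a(n)\alpha^n+L(a'\alpha'^n)$ and $V_m=b(m)\beta^m+L(b'\beta'^m)$, the dominant term of $U_n-U_{n'}$ is $a(n)\alpha^n$, while that of $V_m-V_{m'}$ is $+b(m)\beta^m$ or $-b(m')\beta^{m'}$ according to whether $m>m'$ or $m<m'$ (the two subcases being analogous). Comparing magnitudes via \eqref{eq:Un} and \eqref{eq:Vm} shows that $n\log|\alpha|$ and $\max\{m,m'\}\log|\beta|$ differ by $O(\log K)$, where $K=\max\{n,n',m,m'\}$; in particular all four indices are of the same order $K$.

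Treating the subcase $m>m'$ as representative, I would divide the dominant relation by $b(m)\beta^m$ to obtain the linear form
\[
	\Lambda=\frac{a(n)}{b(m)}\,\alpha^n\beta^{-m}-1=\frac{a(n')\alpha^{n'}-b(m')\beta^{m'}+E}{b(m)\beta^m},
\]
where $E$ collects the subdominant and error terms. Since $a(n)/b(m)$ is of the shape $p(n)/q(m)$ with $p,q$ fixed, Lemma~\ref{lem:height_leq} gives $h(a(n)/b(m))=O(\log K)$. To justify applying Matveev I note that $\Lambda\neq 0$ for large indices: if $\Lambda=0$ then $\alpha^n\beta^{-m}=b(m)/a(n)$, and combining the lower bound $h(\alpha^n\beta^{-m})\geq C_0\max\{n,m\}$ of Lemma~\ref{lem:height_geq} with the upper bound $h(b(m)/a(n))\leq C\log\max\{n,m\}$ of Lemma~\ref{lem:height_leq} yields a contradiction once $K$ is large. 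Applying Theorem~\ref{thm:matveev} with the algebraic numbers $a(n)/b(m),\alpha,\beta$ and $B=K$ then gives $\log|\Lambda|>-C_1(\log K)^2$. On the other hand, estimating the right-hand side of the display above and using $|\alpha|^n\asymp|\beta|^m$ shows $\log|\Lambda|\leq O(\log K)-\min\{(n-n')\log|\alpha|,\,(m-m')\log|\beta|,\,cK\}$ for some $c>0$. Since $cK>C_1(\log K)^2$ for large $K$, the comparison forces one of the index gaps to satisfy $n-n'\leq C_2(\log K)^2$ or $m-m'\leq C_2(\log K)^2$.

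The main obstacle is precisely that this first argument bounds a \emph{gap} rather than $K$ itself, so the regime of ``close'' solutions genuinely survives: if, say, $n-n'=1$, then $a(n')\alpha^{n'}$ is comparable in size to $a(n)\alpha^n$ and the naive form $\Lambda$ above is bounded below by a constant rather than being small. Handling these close solutions is the technical heart of the argument. The remedy is a finer case distinction according to which gaps are small, in each case regrouping the dominant contributions before extracting a new linear form; for instance, when $n-n'=d$ is small one absorbs $a(n)-a(n')\alpha^{-d}$ into a single coefficient of $\alpha^n$, whose height is again $O(\log K)$ and whose non-vanishing is guaranteed by $|\alpha|>1$, and then repeats the Matveev estimate against the remaining gap. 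Iterating this bookkeeping over all cases, and checking throughout that the regrouped coefficients never vanish thanks to $|\alpha|,|\beta|>1$ and the multiplicative independence of $\alpha$ and $\beta$, ultimately bounds $K$ and hence both $n$ and $m$ effectively, yielding the constants $N$ and $M$. As this is exactly the content extracted from the proof in \cite{Chim_Pink_Ziegler2018}, I would quote that proof rather than reproduce the full case analysis.
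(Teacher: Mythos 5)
Your proposal matches the paper's treatment: the paper gives no self-contained proof of this lemma either, stating only that it ``follows immediately from the proof'' in \cite{Chim_Pink_Ziegler2018}, which is precisely the citation you fall back on at the end. Your sketch of that proof's internal mechanism (degenerate cases first, then Matveev's theorem combined with Lemma~\ref{lem:height_geq} and Lemma~\ref{lem:height_leq}, with the close-solution regime handled by regrouping dominant terms and re-applying the lower bound) is a faithful outline of the cited argument, so there is nothing to correct.
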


\begin{proof}[Proof of Lemma~\ref{lem:equiv}]
We need to show that $T(x)\leq S(x) + O(\log x)$. Therefore, 
we need to bound the number of pairs $(n,m)$ which are counted in $T(x)$ and correspond to the same integer $c$ as another pair. Suppose that $(n,m)$ corresponds to an integer $c$ which has more than one representation.
Then by Lemma~\ref{lem:PCZ} we have $n\leq N$ or $m\leq M$. Assume without loss of generality that $m \leq M$. Then we have
\[
	x
	\geq |c|
	= |U_n-V_m|
	\geq |U_n|-|V_m|
	\geq |U_n| - \max\{|V_0|\bb |V_{M}|\}.
\]
Together with \eqref{eq:Un} this yields
\[
	C_1 |\alpha|^n 
	\leq |U_n|
	\leq x +  \max\{|V_0|\bb |V_{M}|\}.
\]
Taking logarithms we get
\[
	n \leq O(\log x).
\]
But there are at most $O(\log x)$ pairs $(n,m)$ with $m \leq M$ and $n\leq O(\log x)$.
Thus $T(x)\leq S(x) + O(\log x)$. 
\end{proof}




\subsection{Some auxiliary inequalities}

\begin{lem}\label{lem:forLowerBound}
Let $k$ be a fixed positive number, $c>1$ and $d$ any fixed real number. Then for $z\geq \max\{k^{(c-1)} e^d,1\}$  the inequality
\begin{equation}\label{eq:lemForLowerBoundI}
	n\leq k z -c\log z
\end{equation}
implies
\begin{equation*}\label{eq:lemForLowerBoundII}
		n + (c-1)\log n + d \leq kz. 
\end{equation*}
\end{lem}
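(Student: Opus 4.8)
The plan is to prove the implication by a single direct estimate, exploiting the hypothesis \eqref{eq:lemForLowerBoundI} in two complementary ways: once in its full strength to control the linear term $n$, and once in a weakened form to control the logarithmic term $\log n$. Throughout I assume $n\ge 1$, which is the only range in which the left-hand side of the conclusion is even defined (and which is the relevant range in the application).

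First I would extract the crude consequence $n\le kz$ from \eqref{eq:lemForLowerBoundI}: since $z\ge 1$ we have $\log z\ge 0$, and because $c>1>0$ the subtracted term $c\log z$ is non-negative, so $n\le kz-c\log z\le kz$. Taking logarithms (legitimate since $1\le n\le kz$ and $\log$ is monotone) then gives $\log n\le\log(kz)=\log k+\log z$, whence, using $c-1>0$,
\[
(c-1)\log n \le (c-1)\log k+(c-1)\log z.
\]

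Next I would substitute into the target quantity, but use the \emph{sharp} bound \eqref{eq:lemForLowerBoundI} for the leading occurrence of $n$ while using only the weaker bound above for $\log n$. This yields
\[
n+(c-1)\log n+d \le \bigl(kz-c\log z\bigr)+(c-1)\log k+(c-1)\log z+d = kz-\log z+(c-1)\log k+d,
\]
where the $-c\log z$ coming from the hypothesis and the $+(c-1)\log z$ coming from the bound on $\log n$ collapse to a single $-\log z$.

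Finally, the desired inequality reduces to checking $-\log z+(c-1)\log k+d\le 0$, i.e. $\log z\ge(c-1)\log k+d=\log\!\bigl(k^{c-1}e^d\bigr)$, which is exactly the assumption $z\ge k^{c-1}e^{d}$ after exponentiating. The computation is routine; the only genuine point to get right is the \textbf{bookkeeping of the two $\log z$ contributions}. Applying the full hypothesis to the linear term while applying only its consequence $n\le kz$ inside the logarithm is what produces the clean cancellation down to $-\log z$, and this is precisely what pins the threshold at $k^{c-1}e^{d}$; the factor $\max\{\,\cdot\,,1\}$ serves only to guarantee $\log z\ge 0$ so that the opening step $n\le kz-c\log z\le kz$ is valid.
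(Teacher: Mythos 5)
Your proof is correct and is essentially identical to the paper's own argument: both deduce $n\leq kz$ and $\log n\leq \log k+\log z$ from the hypothesis, then apply the full bound to the linear term and the weaker bound to $(c-1)\log n$, so that $-c\log z$ and $(c-1)\log z$ collapse to $-\log z$, which is absorbed using $z\geq k^{c-1}e^{d}$. No differences worth noting beyond your (welcome) explicit remark that $n\geq 1$ and that the $\max\{\cdot,1\}$ guarantees $\log z\geq 0$.
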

\begin{proof}
Inequality \eqref{eq:lemForLowerBoundI} implies $n\leq kz$ and $\log n \leq \log k + \log z$. Thus we get
\begin{align*}
	n + (c-1)\log n + d
	&\leq kz -c \log z + (c-1)(\log k + \log z) + d\\
	&= kz - \log z + (c-1) \log k + d\\
	&\leq kz. \qedhere
\end{align*}
\end{proof}

\begin{lem}\label{lem:mlogm}
Let $k$ and $c$ be positive constants. Suppose that  $n\geq N =N(k,c)$ is a large number (to be precise, we need $n$ to satisfy $n\geq e^{\sqrt{2}c^{-1/2}}$ and $k^2c^2(\log n)^4 \leq n$). Suppose that
\[
	n\leq k z + c (\log n )^2
\]
for some $z \geq 2/k$. Then 
\[
	n \leq k z + 4c(\log z)^2.
\]
\end{lem}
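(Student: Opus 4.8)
The plan is to reduce everything to the single inequality $n \le z^2$, equivalently $\log n \le 2\log z$. Once this is established, and once we note that $z$ is large enough to force $\log z > 0$, it follows that $(\log n)^2 \le 4(\log z)^2$; inserting this into the assumed bound $n \le kz + c(\log n)^2$ immediately yields $n \le kz + 4c(\log z)^2$, which is the assertion. So the whole task is to show $z \ge \sqrt{n}$.

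To prove $z \ge \sqrt n$ I would exploit that the correction term is negligible. Rewriting the hypothesis $k^2 c^2 (\log n)^4 \le n$ as $c(\log n)^2 \le \sqrt n / k$ and inserting it into $n \le kz + c(\log n)^2$ gives
\[
	kz \ge n - \frac{\sqrt n}{k}.
\]
When $n$ is large relative to $k$ — precisely when $\sqrt n \ge k + 1/k$, i.e.\ $n \ge (k+1/k)^2$ — this already forces $z \ge \sqrt n$. For the complementary range I would fall back on the standing hypothesis $z \ge 2/k$, which gives $z \ge \sqrt n$ whenever $n \le 4/k^2$. The point is that the two ranges $\{n \ge (k+1/k)^2\}$ and $\{n \le 4/k^2\}$ cover all $n$ under the stated assumptions: combining the two conditions on $n$ in the statement, namely $c(\log n)^2 \ge 2$ (this is $n \ge e^{\sqrt{2}\,c^{-1/2}}$) and $c(\log n)^2 \le \sqrt n/k$, yields $\sqrt n \ge 2k$, and a short computation shows that $(k+1/k)^2 \le 4/k^2$ for $k \le 1$, while for $k \ge 1$ the bound $\sqrt n \ge 2k \ge k + 1/k$ already forces $n \ge (k+1/k)^2$. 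Hence $z \ge \sqrt n$ in every case, and since $z \ge \sqrt n > 1$ for large $n$ both logarithms are positive.

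The core idea — that an additive error of order $\sqrt n$ cannot keep $z$ from being of order $n/k$, so that $\log z$ and $\log n$ agree up to the factor $2$ — is essentially immediate. I do not expect any genuine analytic difficulty. The only real work is the bookkeeping that makes $z \ge \sqrt n$ hold uniformly in $k$, and this is exactly why the statement carries both the lower bound $z \ge 2/k$ and the two calibrated conditions on $n$: the inequality $kz \ge n - \sqrt n/k$ handles large $n$, the bound $z \ge 2/k$ handles small $n$, and the condition $c(\log n)^2 \ge 2$ (through $\sqrt n \ge 2k$) guarantees their overlap. I expect matching up these thresholds across the cases $k \le 1$ and $k \ge 1$, rather than any estimation, to be the fiddly part of the argument.
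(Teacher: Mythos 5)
Your proof is correct, and it pivots on exactly the same intermediate inequality as the paper's proof, namely $\log n \le 2\log z$ (equivalently $z \ge \sqrt{n}$); once that is available, both arguments finish identically by substituting $(\log n)^2 \le 4(\log z)^2$ back into the hypothesis. The difference is how this pivot is obtained. The paper takes logarithms of the hypothesis and uses the subadditivity $\log(r+s)\le \log r+\log s$ for $r,s\ge 2$ (this is where $z\ge 2/k$ and $n\ge e^{\sqrt{2}c^{-1/2}}$ enter), then absorbs $\log k+\log c+2\log\log n$ into $\tfrac{1}{2}\log n$ using $k^2c^2(\log n)^4\le n$ --- three lines, no case distinction. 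You instead stay at the algebraic level: $k^2c^2(\log n)^4\le n$ gives $c(\log n)^2\le \sqrt{n}/k$, hence $kz\ge n-\sqrt{n}/k$, which forces $z\ge\sqrt{n}$ once $\sqrt{n}\ge k+1/k$, and the complementary range of $n$ is handled by the standing bound $z\ge 2/k$; verifying that the two ranges cover everything costs you the case split $k\le 1$ versus $k\ge 1$ together with the observation $\sqrt{n}\ge 2k$. Your version is more elementary (no subadditivity trick), at the price of the bookkeeping in $k$ that the paper's logarithmic computation avoids; it is also mildly illuminating that the same hypotheses play different roles in the two arguments: $z\ge 2/k$ legitimizes subadditivity in the paper, whereas for you it covers small $n$. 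Both proofs are complete and correct.
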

\begin{proof}
Note that for $r,s\geq 2$ we have $\log(r+s)\leq \log r + \log s$. By assumption, $n$ and $z$ are large, in particular $c (\log n )^2\geq 2$ and $kz\geq 2$.
Thus, using the assumptions, we have
\begin{align*}
	\log n  
	&\leq \log \left( kz + c (\log n )^2\right)\\
	&\leq \log (kz) + \log (c (\log n )^2)\\
	&\leq \log k + \log z + \log c + 2 \log \log n\\
	&\leq \log z + 0.5 \log n,
\end{align*}
for $n\geq N$.
Thus $\log n \leq 2 \log z$ and using the assumption again we get
\begin{align*}
	n
	&\leq k z + c (\log n )^2 \\
	&\leq k z + c (2 \log z)^2\\
	&= k z + 4 c (\log z)^2. \qedhere
\end{align*} 
\end{proof}

\section{Lower bound for $T(x)$}\label{sec:lower}

In this section we prove the lower bound for the number of solutions $(n,m)$ to the Diophantine inequality $|U_n-V_m|\leq x$:
\[
	T(x) \geq 
	\frac{(\log x)^2}{\log |\alpha| \log |\beta|}  + O(\log x \cdot \log\log x).
\]
In fact, we show that if $n\leq \log x / \log |\alpha| + O(\log \log x)$ and $m\leq \log x / \log |\beta| + O(\log \log x)$, then $|U_n-V_m|\leq x$ (if $x$ is large enough).

Suppose that
\begin{equation}\label{eq:lB_n}
	n\leq \frac{\log x}{\log |\alpha|} - \left( \frac{\sigma}{\log |\alpha|} + 1 \right) \log \log x.
\end{equation}
Then Lemma~\ref{lem:forLowerBound} with $k=1/\log|\alpha|$, $c={\sigma}/{\log|\alpha|}+1$ and $d=(\log C_2 + \log 2)/\log|\alpha|$ yields for $z=\log x$ large enough
\[
	n + \frac{\sigma}{\log|\alpha|} \log n + \frac{\log C_2 +\log 2}{\log |\alpha|} 
	\leq \frac{\log x}{\log |\alpha|}.
\] 
Multiplying by $\log |\alpha|$ and applying the exponential function we obtain
\[
	|\alpha|^n \cdot n^\sigma \cdot C_2 \cdot 2 
	\leq x, 
\]
which together with \eqref{eq:Un} implies
\[
	|U_n|
	\leq C_2 n^\sigma |\alpha|^n 
	\leq \frac{x}{2}.	
\]
Analogously, we obtain that
\begin{equation}\label{eq:lB_m}
	m\leq \frac{\log x}{\log |\beta|} - \left( \frac{\tau}{\log |\beta|} + 1 \right) \log \log x
\end{equation}
implies
\[
	|V_m|
	\leq C_4 m^\tau |\beta|^m 
	\leq \frac{x}{2}.	
\]
Therefore, for all $(n,m) \in \N^2$ satisfying \eqref{eq:lB_n} and \eqref{eq:lB_m} we have
\[
	|U_n-V_m|
	\leq |U_n| + |V_m|
	\leq x. 
\]
But the number of $(n,m)\in \N^2$ satisfying \eqref{eq:lB_n} and \eqref{eq:lB_m} is larger than
\[
	\left( \frac{\log x}{\log |\alpha|} + O( \log \log x ) \right)
	\left( \frac{\log x}{\log |\beta|} + O( \log \log x ) \right),
\]
so for $x$ large enough we have
\[
	T(x) 
	\geq \frac{(\log x)^2}{\log |\alpha| \log |\beta|} + O(\log x \cdot \log \log x).
\]

\section{Upper bound for $T(x)$}\label{sec:upper}

In this section we use linear forms in logarithms to prove the upper bound for the number of solutions $(n,m)$ to the Diophantine inequality $|U_n-V_m|\leq x$:
\[
	T(x) \leq 
	\frac{(\log x)^2}{\log |\alpha| \log |\beta|}  + O(\log x \cdot (\log\log x)^2).
\]
In fact, we assume that $|U_n-V_m|\leq x$, i.e. 
\begin{equation}\label{eq:equation}
U_n-V_m= c
\end{equation}
with $|c|\leq x$ and show that
$n\leq {\log |c|}/{\log |\alpha|} + O((\log \log |c|)^2)$ and $m\leq {\log |c|}/{\log |\beta|} + O((\log \log |c|)^2)$. This immediately yields the desired bound for $T(x)$.

Note that we can assume that $n$ and $m$ are large enough, i.e. $n\geq N$ and $m\geq M$ for some suitable $N,M$. This is because of the same argument as in the proof of Lemma~\ref{lem:equiv}: Ignoring solutions with $n\leq N$ or $m\leq M$, we only miss $O(\log x)$ solutions, which has no impact on our result.
Similarly, we may assume that $c$ is large enough.

Recall that by \eqref{eq:Un} we have
\begin{equation}\label{eq:U_n}
	|U_n| 
	\geq C_1 |\alpha|^n,
\end{equation}
for $n$ large enough.
On the other hand, by \eqref{eq:Vm} we have
\begin{align}\label{eq:V_mplusx}
	|V_m + c|
	\leq |V_m| + |c|	
	\leq C_4 m^\tau |\beta|^m  + |c|,
\end{align}
for $m$ large enough.
Combining \eqref{eq:equation}, \eqref{eq:U_n} and \eqref{eq:V_mplusx} we get
\begin{equation*}
	C_1 |\alpha|^n 
	\leq C_4 m^\tau |\beta|^m  + |c|,
\end{equation*}
which implies (note that $\log(r+s)\leq \log r + \log s$ for $r,s \geq 2$)
\begin{equation}\label{eq:boundForN}
	n
	\leq C_5 
		+ \frac{m \log |\beta|}{\log |\alpha|} 
		+ \frac{\tau \log m}{\log |\alpha|}
		+ \frac{\log |c|}{\log |\alpha|},
\end{equation}
for some effectively computable constant $C_5$. From now on, if we write a new constant $C_i$, we imply that it exists and it is effectively computable.
Analogously we get
\begin{equation}\label{eq:boundForM}
	m
	\leq C_6 
		+ \frac{n \log |\alpha|}{\log |\beta|} 
		+ \frac{\sigma \log n}{\log |\beta|}
		+ \frac{\log |c|}{\log |\beta|}.
\end{equation}
Assume without loss of generality that
\begin{equation}\label{eq:wlog}
	|\alpha|^n 
	\leq|\beta|^m.
\end{equation}
We rewrite equation \eqref{eq:equation} as
\[
	a(n) \alpha^n + L(a'\alpha'^n) - (b(m) \beta^m + L(b'\beta'^m)) =c.
\] 
Shifting expressions, taking absolute values and estimating we obtain
\begin{align}\label{eq:Phi}
	|\Lambda|	
	:=\left| \frac{a(n) \alpha^n}{b(m)\beta^m}-1 \right|
	&\leq \frac{a'\alpha'^n}{|b(m)||\beta|^m} 
		+ \frac{b'\beta'^m}{|b(m)||\beta|^m} 
		+ \frac{|c|}{|b(m)||\beta|^m}\nonumber \\
	& \leq C_7 \frac{\alpha'^n}{|\beta|^m}
		+ C_8 \frac{\beta'^m}{|\beta|^m} 
		+ C_ 9\frac{|c|}{|\beta|^m},
\end{align}
where we used the fact that $|b(m)|$ cannot be arbitrarily small if $m$ is large enough.

If $\Lambda = 0$, then $\frac{a(n)}{b(m)}=\frac{\beta^m}{\alpha^n}$ and Lemma~\ref{lem:height_geq} and Lemma~\ref{lem:height_leq} yield
\[
	C_0 \max\{n,m\}
	\leq h \left( \frac{\beta^m}{\alpha^n} \right)
	= h \left( \frac{a(n)}{b(m)} \right)
	\leq C \log\left( \max\{n,m\}\right),
\]
which is only possible for small $n$ and $m$.

If $\Lambda \neq 0$, then we can apply Matveev's theorem with $t=3$, $D=[\Q(\alpha,\beta):\Q]$ and
\begin{align*}
	\gamma_1&=\frac{a(n)}{b(m)}, && b_1 = 1,\\
	\gamma_2&=\alpha, && b_2= n, \\
	\gamma_3&=\beta, && b_3=-m.
\end{align*}
Moreover, we set $B=\max\{n,m\}$ and
\begin{align*}
	A_2&=\max\{Dh(\alpha), |\log \alpha|, 0.16\},\\
	A_3&=\max\{Dh(\beta), |\log \beta|, 0.16 \}.
\end{align*} 
In order to choose $A_1$, we use Lemma~\ref{lem:height_leq}:
\[
	\max\left\{D h\left( \frac{a(n)}{b(m)}\right), \left| \log \frac{a(n)}{b(m)}\right|, 0.16 \right\}
	\leq C_{10} \log \left( \max \{n,m\} \right)
	=: A_1.
\]
Then Matveev's Theorem tells us that
\begin{align}\label{eq:PhiLinForm}
	\log |\Lambda|
	&\geq -C(3,D) (1 + \log (3 \max\{n,m\}) )\, C_{10} \log \left( \max\{n,m\}\right) A_2 A_3\nonumber \\
	&\geq - C_{11} \left(\log \left(\max\{n,m\}\right) \right)^2.
\end{align}

In order to use inequality \eqref{eq:Phi}, we distinguish between two cases.

\nl\textbf{Case 1:} $|c|\geq \max\{\alpha'^n, \beta'^m\}$.
Then \eqref{eq:Phi} implies
\[
	|\Lambda|
	\leq (C_7+C_8+C_9)\frac{|c|}{|\beta|^m}
	=C_{12} \frac{|c|}{|\beta|^m}
\]
and together with \eqref{eq:PhiLinForm} this yields
\[
	- C_{11} (\log \left( \max\{n,m\}\right) )^2
	\leq \log |\Lambda|
	\leq \log C_{12} + \log |c| -m \log |\beta|,
\]
which implies
\begin{align}\label{eq:caseImax}
	m \log |\beta| 
	&\leq \log C_{12} + \log |c| + C_{11} \left(\log \left( \max\{n,m\}\right)\right)^2.
\end{align}
By the case assumption we have $|c|\geq \alpha'^n$, i.e. $n\leq \log |c|/\log \alpha'$, and $|c|\geq \beta'^m$, i.e. $m \leq \log |c| / \log \beta'$. Therefore, $\max\{n,m\} \leq C_{13} \log |c|$. Thus \eqref{eq:caseImax} implies
\[
	m \log |\beta| 
	\leq \log C_{12} + \log |c| + C_{11} (\log (C_{13} \log |c|))^2
	\leq \log |c| + C_{14} (\log \log|c|)^2.
\]
Dividing by $\log |\beta|$ we get
\[
	m \leq \frac{\log |c|}{\log |\beta|} + O((\log \log|c|)^2).
\]
Moreover, 
by \eqref{eq:wlog} we have $n \leq m \log |\beta| / \log |\alpha|$, so we also get
\[
	n \leq \frac{\log |c|}{\log |\alpha|} + O((\log \log|c|)^2),
\]
as required.

\nl\textbf{Case 2:} $|c|< \max\{\alpha'^n, \beta'^m\}$.
By assumption \eqref{eq:wlog} inequality \eqref{eq:Phi} implies
\begin{equation}\label{eq:caseII}
	|\Lambda|
	\leq C_7 \frac{\alpha'^n}{|\alpha|^n}
		+ C_8 \frac{\beta'^m}{|\beta|^m} 
		+ C_9 \frac{|c|}{|\beta|^m}.
\end{equation}

\nl\textbf{Case 2a:} $\max\{\alpha'^n, \beta'^m\}=  \alpha'^n$, i.e. $|c|<\alpha'^n$. Then $\log |c| \leq n \log \alpha'$ 
and \eqref{eq:boundForM} implies $m \leq C_{15} n$.
Then \eqref{eq:caseII} implies
\[
	|\Lambda|
	\leq (C_7 + C_9) \left( \frac{\alpha'}{|\alpha|}\right)^n + C_8 \left(\frac{\beta'}{|\beta|}\right)^m
	\leq (C_7 + C_9) \left( \frac{\alpha'}{|\alpha|}\right)^{m/C_{15}} + C_8 \left( \frac{\beta'}{|\beta|}\right)^{m}.
\]
Setting
\[
	\frac{1}{\gamma}
	:= \max \left\{\left( \frac{\alpha'}{|\alpha|}\right)^{1/C_{15}},  \frac{\beta'}{|\beta|}  \right\}<1
\]
we get
\[
	|\Lambda|\leq C_{16} \gamma ^{-m}.
\]
Combined with \eqref{eq:PhiLinForm} this yields
\[
	-C_{11}\left(\log \left(\max\{n,m\}\right)\right)^2 \leq \log |\Lambda | \leq \log C_{16} -m \log \gamma.
\]
This implies
\[
	m \log \gamma 
	\leq \log C_{16} + C_{11}\left(\log \left( \max\{n,m\}\right)\right)^2
	\leq \log C_{16} + C_{11}(\log (C_{15}n))^2
\]
and we get
\begin{equation}\label{eq:mleqlogn2}
	m \leq C_{17} (\log n)^2.
\end{equation}
This means that $m$ is actually very small compared to $n$ and therefore $|c|\approx |\alpha|^n$, 
i.e. $n \approx \log |c|/ \log|\alpha|$, which is exactly what we need. 
In order to formalise this argument,
we go back to \eqref{eq:equation} and use inequalities \eqref{eq:Un} and \eqref{eq:Vm}:
\begin{align*}
	|c| 
	&= |U_n - V_m|\\
	&\geq |U_n| - |V_m|\\
	&\geq C_1 |\alpha|^n - C_4 m^\tau |\beta|^m\\
	&\geq C_1 |\alpha|^n - C_4 \left(C_{17} (\log n)^2\right)^\tau |\beta|^{C_{17}(\log n)^2}.
\end{align*}
Taking logarithms and noting that $\log (r-s) \geq \log r - \log s$ for $r\geq s+2 \geq 4$, we get
\begin{align*}
	\log |c|
	& \geq \log C_1 + n \log |\alpha| - \left( \log C_4 + \tau (\log C_{17} + 2 \log \log n) + C_{17}(\log n)^2 \log |\beta|\right)\\
	& \geq n \log |\alpha| - C_{18} (\log n)^2,
\end{align*}
for $n$ large enough.
We rewrite this inequality as
\[
	n \leq \frac{1}{\log |\alpha|}\log |c| + \frac{C_{18}}{\log |\alpha|}(\log n)^2.
\]
Now Lemma~\ref{lem:mlogm} (with $z=\log |c|$) tells us that
\[
	n 
	\leq \frac{1}{\log |\alpha|}\log |c| + \frac{4 C_{18}}{\log |\alpha|}(\log \log |c|)^2
	= \frac{\log |c|}{\log |\alpha|} + O(( \log \log |c|)^2).
\]
Moreover, inserting this into \eqref{eq:mleqlogn2} yields
\[
	m \leq O((\log \log |c|)^2).
\]

\nl\textbf{Case 2b:} $\max\{\alpha'^n, \beta'^m\}=\beta'^m$, i.e. $|c|< \beta'^m$. This case is completely analogous to Case 2a.

\nl
Thus, in every case we obtain 
\[
	n\leq \frac{\log |c|}{\log |\alpha|} + O((\log \log |c|)^2)
\quad \text{and} \quad	
	m\leq \frac{\log |c|}{\log |\beta|} + O((\log \log |c|)^2).
\]
Therefore, all solutions $(n,m)$ to the Diophantine inequality $|U_n-V_m|\leq x$ have the property $n\leq {\log x}/{\log |\alpha|} + O((\log \log x)^2)$ and $m\leq {\log x}/{\log |\beta|} + O((\log \log x)^2)$. But there are at most
\[
	\frac{(\log x)^2}{\log |\alpha| \log |\beta|}  + O(\log x \cdot (\log\log x)^2)
\]
such solutions. Thus
\[
	T(x) \leq \frac{(\log x)^2}{\log |\alpha| \log |\beta|}  + O(\log x \cdot (\log\log x)^2).
\]
This completes the proof of Theorem~\ref{thm:mainII} and by Lemma~\ref{lem:equiv} 
we have also proved Theorem~\ref{thm:main}.

\section{Further problems}\label{sec:further}

The key in our proof is the use of lower bounds for linear forms in logarithms.
This tool only works if $\alpha$ and $\beta$ are algebraic. The natural question is: What happens if $\alpha$ and/or $\beta$ are transcendental? We pose the following problem.

\begin{problem}
For which multiplicatively independent complex numbers $\alpha,\beta \in \C$ with $|\alpha|>1$ and $|\beta| > 1$ do we have
\[
	\#\{(n, m)\in \N^2\fdg ~  | \alpha^n - \beta^m | \leq x \}  \sim \frac{(\log x)^2}{\log |\alpha|\cdot \log  |\beta| },
	\quad \text{as } x \to \infty.
\]
In particular, is the above true for $\alpha=\pi$ and $\beta=e$? Note that it is an open conjecture that $\pi$ and $e$ are  multiplicatively independent (this is equivalent to $\log \pi$ being irrational). In that case it would be interesting to find out whether
\[
	\#\{(n, m)\in \N^2\fdg ~  | \pi^n - e^m | \leq x \}  \sim \frac{(\log x)^2}{\log \pi },
	\quad \text{as } x \to \infty.
\]
\end{problem}

Of course, the same question can be asked for sequences with sums of powers. For instance, we pose the following problem. 

\begin{problem}
Is the following true?
\[
	\#\{(n, m)\in \N^2\fdg ~  | \pi^n + (\sqrt 5)^n - 7^m - e^m| \leq x \}  \sim \frac{(\log x)^2}{\log \pi \cdot \log 7 },
	\quad \text{as } x \to \infty.
\]
\end{problem}

And finally, we ask what happens if we allow three different powers.

\begin{problem}
For which positive numbers $\alpha, \beta, \gamma$, all larger than 1 and any two of them  multiplicatively independent, do we have
\[
 	\#\{(n, m, k)\in \N^3 \fdg | \alpha^n + \beta^m - \gamma^k| \leq M\} =  \infty,
\]
for some $M>0$?
\end{problem}


\section*{Acknowledgment}
The first author was supported by the Austrian Science Fund (FWF) under the project (SFB) F55. The second and the last author were supported by the Austrian Science Fund (FWF) under the project I4406. The third author was also supported by the Austrian Science Fund (FWF) under the project W1230.

\end{document}